\newtheorem{theorem}{Theorem}[section]
\newtheorem{lemma}[theorem]{Lemma}
\newtheorem*{thm1}{Main Theorem}
\newtheorem*{thm2}{Theorem A}
\newtheorem*{thm3}{Theorem B}
\theoremstyle{definition}
\theoremstyle{remark}
\newtheorem{remark}[theorem]{Remark}
\let\c@equation\c@theorem
\numberwithin{equation}{section}
\title[]{Ricci limit spaces are semi-locally simply connected}
\author[]{Jikang Wang}
\thanks{}
\thanks{The author is partially supported by  NSFC 11821101 and BNSF Z19003.}
\begin{document}
\date{}
\maketitle
\begin{abstract}
Let $(X,p)$ be a Ricci limit space. We show that for any $\epsilon > 0$ and $x \in X$, there exists $r< \epsilon$, depending on $\epsilon$ and $x$, so that any loop in $B_{r}(x)$ is contractible in $B_{\epsilon}(x)$. In particular, $X$ is semi-locally simply connected. Then we show that the generalized Margulis lemma holds for Ricci limit spaces of $n$-manifolds.
\end{abstract}
\section{Introduction}

 A Ricci limit space $(X,p)$ is the pointed Gromov-Hausdorff limit space of a sequence of complete $n$ dimensional Riemannian manifolds $(M_i,p_i)$ with a uniform Ricci curvature lower bound. $(X, p)$ is non-collapsing if Vol$(B_1(p_i))$ has a uniform lower bound. The regularity and geometric structure theory of $(X,p)$ have been studied extensively by Cheeger, Colding and Naber \cite{CheegerColding1997, CheegerColding2000a,CheegerColding2000b,CheegerNaber2013,CheegerNaber2015, ColdingNaber2012}. In this paper, we study the local topology of $(X,p)$.

If we further assume $M_i$ has a uniform sectional curvature lower bound, then the limit space $X$ is an Alexandrov space. In an Alexandrov sapce $(X,p)$, the tangent cone $T_p$ is a unique metric cone \cite{BGP1992} and Perelman proved that a neighborhood of $p$ is homeomorphic to the tangent cone \cite{Perelman1991}. In particular,  any Alexandrov space is locally contractible. However, it was shown in \cite{Menguy2000} that even a non-collapsing Ricci limit space may have locally infinite second Betti number and thus is not locally contractible. Due to this example, we focus on the local fundamental group of a Ricci limit space.

For a Ricci limit space $(X, p)$, Sormani and Wei showed that the universal cover of $X$ exists \cite{SormaniWei2001, SormaniWei2004}; while it was unknown the universal cover is simply connected or not. Recall that if a topological space is path-connected, locally path-connected and semi-locally simply connected, then it has a simply connected universal cover \cite{Hatcher}. $(X,p)$ is path-connected and locally path-connected. Recently Pan and Wei showed that a non-collapsing Ricci limit space is semi-locally simply connected \cite{PanWei2019}. Then Pan and the author proved in \cite{PanWang} that $(X,p)$ is semi-locally simply connected if $M_i$ has Ricci bounded covering geometry, that is,  universal covers of all $r$-balls in manifolds are non-collapsing. In the proof of \cite{PanWang}, we establish a slice theorem for pseudo-group actions; see also theorem \ref{t1}. The main theorem of this paper is that actually any Ricci limit space is semi-locally simply connected. 

\begin{thm1}
Assume $(M_i,p_i)$ is a sequence of complete $n$-manifolds with $Ric \ge -(n-1)$ and $(M_i, p_i) \overset{GH}\to (X,p)$. Then $X$ is semi-locally simply connected, i.e., for all $x \in X$, there exists $r_x$ such that any loop in $B_{r_x}(x)$ is contractible in $X$. 
\end{thm1}

The main theorem implies that $X$ has a simply connected universal cover.
Combining the main theorem  and theorem 1.4 in \cite{EnnisWei2010}, the universal cover of $X$ is also a Ricci limit space.
\begin{theorem}
Assume $(M_i,p_i)$ is a sequence of complete $n$-manifolds with $Ric \ge -(n-1)$. $(M_i,p_i)$ converges to $(X,p)$. Let $\tilde{X}$ be the universal cover of $X$. Then by passing to a subsequence, there exists $R_i \to \infty$ and cover spaces $\hat{B}(p_i,R_i)$ of the closed ball $\bar{B}_{R_i}(p_i)$ so that $(\hat{B}(p_i,R_i), \hat{p}_i)$ converges to $(\tilde{X}, \tilde{p})$.
\end{theorem}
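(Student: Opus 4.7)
The plan is to derive this theorem as a more or less formal consequence of the Main Theorem together with the cited theorem 1.4 of \cite{EnnisWei2010}. The structural observation is that the Main Theorem supplies exactly the topological hypothesis (semi-local simple connectedness of $X$) needed so that the universal cover $\tilde{X}$ exists as a genuine simply connected covering space, and then the Ennis--Wei convergence result identifies $\tilde{X}$ as a pointed GH-limit of covers of large balls in the approximating manifolds.

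First, I would record the input: by the Main Theorem, $X$ is semi-locally simply connected; combined with path-connectedness and local path-connectedness of Ricci limit spaces (a standard consequence of Cheeger--Colding, already noted in the introduction), this implies by the classical covering space theory that $X$ admits a simply connected universal cover $\tilde{X}$, so $\tilde{p}$ is well defined as any lift of $p$. Next, I would quote theorem 1.4 of \cite{EnnisWei2010}: given a GH-convergent sequence $(M_i,p_i)\to (X,p)$ with $\mathrm{Ric}\geq -(n-1)$ such that the limit $X$ has a (simply connected) universal cover, there exist $R_i\to\infty$ and covering spaces $\hat B(p_i,R_i)$ of the closed balls $\bar B_{R_i}(p_i)$ whose pointed GH-limits, after passing to a subsequence, recover $(\tilde X,\tilde p)$. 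Feeding our $X$ and $\tilde X$ into this statement produces the desired subsequence, radii $R_i$, covers $\hat B(p_i,R_i)$, and basepoints $\hat p_i$.

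In this setup the only real content beyond citation is checking that the Main Theorem's conclusion is precisely the hypothesis Ennis--Wei require; if their statement is phrased in terms of a slightly different topological condition (for example, that every sufficiently small loop at $x$ bounds in a fixed neighborhood, rather than merely being null-homotopic in $X$), I would reduce to that form using the uniform version given in the abstract and Main Theorem, namely that for every $\epsilon>0$ and $x\in X$ there exists $r<\epsilon$ such that every loop in $B_r(x)$ is contractible in $B_\epsilon(x)$. This local-to-semilocal matching is the only potential obstacle; once it is verified, the theorem follows immediately. Since the two hypotheses are in fact equivalent for path-connected, locally path-connected spaces, the proof will reduce to a one-line application of the Main Theorem followed by one invocation of \cite[Theorem 1.4]{EnnisWei2010}.
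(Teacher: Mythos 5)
Your proposal matches the paper's reasoning exactly: the paper also derives this theorem in one line by combining the Main Theorem (which upgrades the Sormani--Wei universal cover of $X$ to a genuine simply connected universal cover) with Theorem 1.4 of Ennis--Wei, which supplies the subsequence, radii $R_i\to\infty$, and covers $\hat B(p_i,R_i)$ converging to $(\tilde X,\tilde p)$. The hypothesis-matching you flag as the only potential obstacle is indeed unproblematic, since Sormani--Wei already guarantee existence of a universal cover in the generalized sense and the Main Theorem identifies it as the simply connected one, so the citation goes through verbatim.
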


Using the main theorem, we can easily generalize $\pi_1$-onto property in \cite{Tuschman1995} to the Ricci limit space; see also \cite{SormaniWei2001}. 
\begin{theorem}\label{onto}
Assume $M_i$ is a sequence of complete $n$-manifolds with $Ric \ge -(n-1)$ and $diam(M_i) \le D$ for some fixed $D$. Suppose $M_i$ converges to $X$. Then there exists a surjective homeomorphism $\Phi_i : \pi_1(M_i) \to \pi_1(X)$ for all large $i$. 
\end{theorem}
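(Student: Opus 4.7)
The plan is to extend the classical argument of Tuschmann \cite{Tuschman1995} to the Ricci-limit setting, with the Main Theorem furnishing the uniform semi-local simple connectedness that in Tuschmann's case came from a sectional curvature bound. Since $\mathrm{diam}(M_i)\le D$ forces $\mathrm{diam}(X)\le D$, the limit $X$ is compact. Applying the Main Theorem at each $x\in X$ (say with $\epsilon=1$) and passing to a finite subcover of $X$, one obtains a single $r_0>0$ such that any loop contained in any ball $B_{r_0}(x)\subset X$ is null-homotopic in $X$. This uniform contractibility radius is the sole non-trivial input; everything else is discretization bookkeeping.

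Next I would fix $\epsilon_i$-GH approximations $f_i:M_i\to X$ with $\epsilon_i\to 0$ and a subdivision scale $\delta\ll r_0$, and define $\Phi_i$ on a loop $\gamma\subset M_i$ by subdividing $\gamma$ into vertices $x_0,\dots,x_N=x_0$ with consecutive distance less than $\delta$, setting $y_j=f_i(x_j)\in X$, and joining successive $y_j$'s by minimizing segments to form a loop $\bar\gamma\subset X$. Declaring $\Phi_i([\gamma])=[\bar\gamma]$, well-definedness under refinement of the subdivision or alternative choices of joining segments reduces to showing that two such loops differ by finitely many correction loops of length $O(\delta+\epsilon_i)$, which for $\delta,\epsilon_i$ small enough fit inside balls of radius $r_0$ and are therefore null-homotopic. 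Homotopy invariance is handled by discretizing a homotopy between $\gamma$ and $\gamma'$ in $M_i$ on a fine grid, reducing the comparison to a product of elementary quadrilaterals each sitting in a ball of radius $r_0$. The same estimates make $\Phi_i$ a homomorphism.

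For surjectivity, given $[\alpha]\in\pi_1(X)$ I would subdivide $\alpha$ on scale $\delta$ into $y_0,\dots,y_N$, pick preimages $x_j\in M_i$ with $d(f_i(x_j),y_j)<\epsilon_i$, and join successive $x_j$'s by minimizing segments in $M_i$ to obtain a loop $\tilde\alpha$. Tracing through the construction, $\Phi_i([\tilde\alpha])$ differs from $[\alpha]$ only by loops of diameter $O(\delta+\epsilon_i)$ inside balls of radius $r_0$, and is therefore equal to $[\alpha]$. The main obstacle is the first step: pointwise semi-local simple connectedness does not in general promote to a uniform radius on an arbitrary metric space, so the argument really depends on the Main Theorem together with the compactness of $X$; once $r_0$ is in hand, the only care required is to choose $\delta$ and then $i$ so that $\delta$, $\epsilon_i$, and the grid spacing of any homotopy are all much smaller than $r_0$.
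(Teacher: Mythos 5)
Your proposal is correct and follows essentially the same route the paper takes for its local version, Lemma~\ref{l4}: extract a uniform contractibility radius on $X$ from Theorem~A (you via compactness and a Lebesgue-number argument, the paper via a sequential contradiction), then define $\Phi_i$ by discretizing loops, transporting vertices through the GH-approximation, joining by geodesics, and verifying well-definedness, homotopy invariance, and surjectivity by filling in small correction loops that fit inside balls of that uniform radius. The paper does not write out a standalone proof of this theorem, but your argument supplies exactly what its citation to Tuschmann's $\pi_1$-onto framework indicates.
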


We will prove Theorem A below, which is stronger than the main theorem. At first we need the notion of $1$-contractibility radius; see also \cite{PanWei2019}.  
Define
\begin{equation}\nonumber
\rho (t,x)= \inf \{ \infty, r \geq t | \text{ any loop in } B_{t}(x) \text{ is contractible in } B_{r}(x) \}.  
\end{equation}

\begin{thm2}
Let $(M_{i},p_{i})$ be a sequence of $n$-manifolds converging to $(X,p)$ such that for all $i$, \\
(1). $B_{4}(p_{i}) \cap \partial M_{i}= \emptyset$ and the closure of $B_{4}(p_{i})$ is compact,\\
(2). $Ric \geq -(n-1)$ on $B_{4}(p_{i})$. \\
Then $\lim \limits_{t \to 0} \rho(t,p) = 0$.
\end{thm2}  

Kapovitch and Wilking proved the generalized Margulis lemma for $n$-manifolds with a uniform Ricci curvature lower bound \cite{KapovitchWilking2011}; see also theorem \ref{KW}. As an application of Theorem A, we will prove in section 4 that the generalized Margulis lemma holds for Ricci limit spaces of $n$-manifolds as well.

\begin{thm3}(generalized Margulis lemma) 
There exist positive constants $\epsilon$ and $C$, depending on $n$, such that the following holds. Let $(X,p)$ be the limit of a sequence of complete $n$-manifolds $(M_i,p_i)$ with $Ric \ge -(n-1)$. For any $x \in X$, the image of the natural homomorphism
$$\pi_1(B_{\epsilon}(x), x) \to \pi_1(B_1(x), x)$$
contains a nilpotent subgroup $N$ of index $\le C$. Moreover, $N$ has a nilpotent basis
of length at most $n$. 
\end{thm3}
\begin{remark}
We may have different $n$ such that $(X,p)$ is the Ricci limit space of $n$-manifolds. For example, let $X$ be a single point and $S_i$ be the circle with radius $1/i$. For each integer $n>0$, torus $S_i^n$ converges to $X$ as $i \to \infty$. In Theorem B, we may choose $n$ as the minimal integer so that $(X,p)$ is the Ricci limit space of $n$-manifolds.   
\end{remark}

On a metric space $Y$, we call two paths $\gamma:[0,1] \to Y$ and $\gamma':[0,1] \to Y$ $\epsilon$-close to each other if for any $t \in [0,1]$, $d(\gamma(t),\gamma'(t))<\epsilon$. In the case that $\gamma$ and $\gamma'$ are in different (but $\epsilon$ GH-close) spaces $Y$ and $Y'$, we have an admissible metric on the disjoint union $Y \sqcup Y'$ so that images of $Y\hookrightarrow Y \sqcup Y'$ and $Y' \hookrightarrow Y \sqcup Y'$ are $2\epsilon$ Hausdorff-close to each other. Then we define that $\gamma$ is $3\epsilon$-close to $\gamma'$  if they are $3\epsilon$-close to each other in $Y \sqcup Y'$ with the admissible metric; briefly, we say $\gamma$ is close to $\gamma'$.

Let's sketch the proof of Theorem A; the proof relies on the construction of homotopy in \cite{PanWei2019} and a slice theorem for pseudo-group actions in \cite{PanWang}.  Let $D$ be the unit disc in $\mathbb{R}^2$. Given a loop in a small ball $B_r(p)$, we want to construct a homotopy map $H_{\infty}: D \to B_{\rho}(p)$ such that $H_{\infty}(\partial D)$ is the given loop and the radius $\rho$ converges to $0$ as $r \to 0$. 

We first recall the construction of homotopy by Pan and Wei in \cite{PanWei2019}. They call $p$ Type I if there exists $r_0 > 0$ such that a family of $t$-functions 
\begin{equation*}
\{\rho (t,x_i)| x_i \in M_i, d(x_i,p_i) < r_0 \}
\end{equation*} 
are equally continuous at $t=0$. Assume $p$ is Type I. For any $x$ in $B_{r_0}(p)$ and any loop $\gamma$ in a small neighborhood of $x$, we can find $\gamma_i$ in $M_i$ which is close to $\gamma$ and contractible in a fixed ball of $x_i$ where $x_i \in M_i$ converges to $x$.
 Using such $\gamma_i$ and inductive construction, they can find a homotopy map on the limit space and show that  $\lim \limits_{t \to 0} \rho(t,p) = 0$. Notice that their proof for Type I points doesn't rely on the volume condition.
 
 However, as they mentioned, even a non-collapsing Ricci limit space may contain points which are not Type I. Therefore they had to handle with other points (Types II and III) using the non-collapsing volume condition, but their proof for Type II points can't work with a collapsing Ricci limit space. Note that a collapsing limit space may have no Type I point at all. For example, let $S_{i}$ be a circle with radius $1/i$. Then $S_{i}$ converges to a point as $i \to \infty$ and this point is not Type I.

Our key observation in this paper is that we can use slice theorem \ref{t1} to prove lemma \ref{l1}, which can play the same role of Type I condition in the construction of homotopy. Lemma \ref{l1} says that for any $x$ in a Ricci limit space and any loop $\gamma$ in a small neighborhood of $x$, we can find a loop $\gamma_i$ in $M_i$ for large $i$, which is close to $\gamma$ and has controlled homotopy property in the following sense: $\gamma_i$ is homotopic to a short loop $\gamma_i'$ and the homotopy image is contained in a fixed ball $B_{4l}(x_i)$ where $x_i$ in $M_i$ converges to $x$; the length of $\gamma_i'$ converges to $0$ as $i \to \infty$. 

In lemma \ref{l1} we get a loop $\gamma_i$ homotopic to a short loop instead of a constant loop. We will see that there is no difference between a short loop and a constant loop in the construction of homotopy on the limit space; compare lemma \ref{lemma3} in this paper with lemma 4.2 in \cite{PanWei2019}. Roughly speaking, if we see manifolds from the limit space, we can not distinguish short loops constructed in lemma \ref{l1} and constant loops in manifolds.
 
Notice that lemma \ref{l1} holds for any point in a Ricci limit space. Therefore our construction works for both collapsing and non-collapsing cases; also there is no need to classify points in the limit space as Pan and Wei did in \cite{PanWei2019}.  

To find such $\gamma_i$ and $\gamma_i'$ in lemma \ref{l1} , we consider $\bar{B}_l(\tilde{x}_i) \subset \widetilde{B_{4l}(x_i)}$ and equivariant convergence (see section 2) $$(\bar{B}_l(\tilde{x}_i),\tilde{x}_i,G_i) \to (\bar{B}_l(\tilde{x}), \tilde{x},G).$$ There is a $G_{\tilde{x}}$-slice $S$ at $\tilde{x}$ by theorem \ref{t1}. We may assume $B_r(x) \subset S/G_{\tilde{x}}$. Since $G_{\tilde{x}}$ is compact, we can lift $\gamma$ to a path $\tilde{\gamma}$ in $S$; $\tilde{\gamma}$ may not be a loop if $\gamma$ is not based on $x$. Assume two end points of $\tilde{\gamma}$ are $\tilde{z}$ and $g\tilde{z}$ where $g \in G_{\tilde{x}}$. Next we find $g_i \in G_i$ close to $g$ and $\tilde{z}_i \in \bar{B}_l(\tilde{x}_i)$ close to $\tilde{z}$. Then we can construct a path $\tilde{\gamma}_i$, in $\bar{B}_l(\tilde{x}_i)$, from $\tilde{z}_i$ to $g_i\tilde{z}_i$, so that $\tilde{\gamma_i}$ is close to $\tilde{\gamma}$. Let $\tilde{\gamma}_i'$ be a geodesic  from $\tilde{x}_i$ to $g_i\tilde{x}_i$. The length of $\tilde{\gamma}_i'$ converges to $0$ since $g \in G_{\tilde{x}}$ and $g_i$ converges to $g$ as $i \to \infty$. Now we define loops
$$\gamma_i:=\pi(\tilde{\gamma}_i), \ \gamma_i':=\pi(\tilde{\gamma}_i')$$ 
in $M_i$. $\gamma_i$ is close to $\gamma$ since $\tilde{\gamma}_i$ is close to $\tilde{\gamma}$; the length of $\gamma_i'$ is equal to the length of $\tilde{\gamma}_i'$ which converges to $0$.
$\gamma_i$ and $\gamma_i'$ are homotopic to each other in $B_{4l}(x_i)$ since both of them correspond to the deck transformation $g_i \in \pi_1(B_{4l}(x_i))$.

The homotopy between $\gamma_i$ and $\gamma_i'$ may not converge as $i \to \infty$, therefore we can't directly construct a homotopy map on the limit space by the homotopy maps on manifolds. However, by lemma \ref{lemma3}, we can use the homotopy map between $\gamma_i$ and $\gamma_i'$ to decompose $\gamma$ into many loops; each new loop is contained in a smaller ball. Then we repeat the above process for each new loop and decompose them again and again. By lemma \ref{lemma2} we will get a desired homotopy map $H_{\infty}$ which shows that $\gamma$ is contractible. Moreover, the image of $H_{\infty}$ is contained in  a fixed ball.

The author would like to thank his advisor Xiaochun Rong for many helpful discussions and Jiayin pan for a helpful comment on a revision.

\section{Preliminaries: slice of pseudo-group actions}

Let $Y$ be a completely regular topological space and $G$ be a Lie group. We call $Y$ a $G$-space if $G$  acts as homeomorphisms on $Y$. For any point $y \in Y$, define isotropy group $$G_{y}= \{ g \in G | \ gy=y \}.$$ Given a subset $S \subset Y$, we say $S$ is $G_y$-invariant if $G_y S=S$. For a $G_y$-invariant set $S$, define 
\begin{equation*}
G \times_{G_y} S = G \times S / \sim, 
\end{equation*}
with quotient topology, and $\sim$ is the equivalence relation $(g,s) \sim (gh^{-1}, hs)$ for all $g \in G, h \in G_{y}, s \in S$. There is a natural left $G$-action on $G \times_{G_y} S$ by $g:[g',s] \mapsto [gg',s]$, where $g \in G$ and $[g',s] \in G \times_{G_y} S$.

We define $S \subset Y$ a $G_y$-slice (briefly, a slice) at $y$ if the followings hold: \\
(1). $y \in S$ and $S$ is $G_y$-invariant; \\
(2). $GS$ is an open neighborhood of $y$; $[g,s] \mapsto gs$ is a $G$-homeomorphism between $G \times_{G_y} S$ and $GS$.

In particular, the second condition above implies that $(G \times_{G_y} S) / G = S/G_y$ is homeomorphic to $GS/G$.
 
The following slice theorem is due to Palais \cite{Palais1961}.
\begin{theorem}
Let $G$ be a Lie group, $Y$ be a $G$-space and $y \in Y$. The following two conditions are equivalent: \\
(1). $G_{y}$ is compact and there is a slice at $y$. \\
(2). There is a neighborhood $U$ of $y \in Y$ such that $\{g \in G | gU \cap U \neq 0 \}$ has compact closure in $G$. 
\end{theorem}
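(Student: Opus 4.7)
The plan is to prove the two implications separately: $(1)\Rightarrow(2)$ is essentially formal, while $(2)\Rightarrow(1)$ carries the geometric content and requires exploiting the Lie group structure of $G$.

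For $(1)\Rightarrow(2)$, I would first use compactness of $G_y$ together with continuity of the action to choose a $G_y$-invariant open neighborhood $V\subset S$ of $y$ with compact closure. Pick a compact symmetric neighborhood $K$ of $e\in G$ and set $U=K\cdot V\subset GS$. If $gU\cap U\neq\emptyset$, write $gk_1 v_1=k_2 v_2$ with $k_i\in K$ and $v_i\in V$; pulling back through the $G$-homeomorphism $G\times_{G_y}S\cong GS$ yields $[gk_1,v_1]=[k_2,v_2]$, so $gk_1=k_2 h$ for some $h\in G_y$. Hence $g\in K\cdot G_y\cdot K^{-1}$, which has compact closure.

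For $(2)\Rightarrow(1)$, let $H=\overline{\{g\in G:gU\cap U\neq\emptyset\}}$, compact by hypothesis. Since $G_y\subset H$ and the stabilizer of a continuous action is closed, $G_y$ is compact. The core task is to build a slice. The plan is: first produce a $G_y$-invariant open neighborhood $W_0\subset U$ of $y$ (using continuity of the action and compactness of $G_y$); then invoke the local cross-section theorem for the coset space $G/G_y$---valid because $G$ is a Lie group and $G_y$ a compact subgroup---to obtain a continuous map $\sigma:\mathcal N\to G$ on an open neighborhood $\mathcal N\subset G/G_y$ of the identity coset with $\sigma([e])=e$. Using $\sigma$, prune $W_0$ to a smaller $G_y$-invariant set $S\ni y$ so that the only $g\in H$ with $gS\cap S\neq\emptyset$ lie in $G_y$; the pruning removes the closed subset of points $z\in W_0$ for which some $g\in H\setminus G_y\sigma(\mathcal N)$ sends $z$ back into $W_0$, a set which is closed by continuity of the action and compactness of $H\setminus G_y\sigma(\mathcal N)$. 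With such $S$, the map $\Phi:G\times_{G_y}S\to G\cdot S$, $[g,s]\mapsto gs$, is a continuous $G$-equivariant bijection; the cross-section $\sigma$ then provides the local trivializations needed to see that $GS$ is open and that $\Phi$ is a homeomorphism.

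The main obstacle is the slice construction itself in the purely topological (non-smooth) setting: without an exponential map or differentiable structure on $Y$, one cannot write down a canonical transverse section, so $S$ must be assembled by pruning a $G_y$-invariant neighborhood, and one must invoke the local cross-section theorem for $G/G_y$ to organize the orbit directions. Once $S$ is produced with the \emph{one-orbit-meets-once} property for elements outside $G_y$, all the slice axioms follow formally from the cross-section.
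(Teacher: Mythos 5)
This statement is Palais's slice theorem, which the paper cites from \cite{Palais1961} without reproducing a proof, so there is no proof in the paper to compare against; your attempt must be judged on its own.

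Your argument for $(1)\Rightarrow(2)$ is essentially correct: writing $gk_1v_1=k_2v_2$ and pulling back through the $G$-homeomorphism to $G\times_{G_y}S$ yields $g\in K\,G_y\,K^{-1}$, whose closure is compact; this is the standard and complete argument.

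The $(2)\Rightarrow(1)$ direction has a genuine gap, and it is exactly at the step you describe as formal. First, the set you propose to excise,
\[
B=\{z\in W_0 : \exists\, g\in H\setminus G_y\sigma(\mathcal N),\ gz\in W_0\},
\]
is \emph{open} in $W_0$, not closed: $B=W_0\cap\mathrm{pr}_Y\bigl(a^{-1}(W_0)\bigr)$ where $a\colon H'\times Y\to Y$ is the (continuous) action and $W_0$ is open, and projections carry open sets to open sets. To get a closed bad set you must replace $W_0$ by its closure $\overline{W_0}$ inside the inner condition, and then the closedness of $\{z : \exists g\in H',\ gz\in\overline{W_0}\}$ follows from compactness of $H'$ via the proper-projection argument; but you then owe a verification that $y$ is not in this larger bad set, i.e.\ that $H'\cdot y\cap\overline{W_0}=\varnothing$, which requires shrinking $W_0$ first using complete regularity of $Y$ and the compactness of $H'\cdot y$, and a check that the bad set (and hence $S$) is $G_y$-invariant, which in turn needs $U$ itself to be chosen $G_y$-invariant so that $H$ is right-$G_y$-invariant. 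These are fixable, but as written the pruning step is not correct.

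The more serious omission is the final sentence, that once $S$ meets each orbit only in a single $G_y$-orbit ``all the slice axioms follow formally from the cross-section.'' This is precisely the nontrivial content of Palais's theorem. You do get that $[g,s]\mapsto gs$ is a continuous $G$-equivariant bijection $G\times_{G_y}S\to GS$, but you still must show (a) that $GS$ is an open neighborhood of $y$ in $Y$, and (b) that the inverse map is continuous, i.e.\ that for $z$ near $y$ there is a unique $g$ near $e$ (modulo $G_y$) with $g^{-1}z\in S$, and that $g$ depends continuously on $z$. In the smooth setting one would invoke the implicit function theorem; in the completely regular setting this requires an explicit construction — typically a $G_y$-invariant Urysohn-type function and a ``kernel''/``tube'' argument, or equivalently a retraction onto $S$ built together with the slice. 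It does not follow merely from the existence of the local cross-section $\sigma$ and the one-orbit-meets-once property of $S$. You should either carry out this openness argument or state explicitly that you are invoking Palais's tube construction at this point.
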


To study the local fundamental group, it is natural to consider the universal of a ball in $M_i$; see also \cite{Huang2020, HKRX2020} for related work. Let $\widetilde{B_{4}(p_i)}$ be the universal cover of $B_{4}(p_i)$ and choose $\tilde{p}_i$ such that $\pi (\tilde{p}_i)=p_i$. $\widetilde{B_{4}(p_i)}$ may have no converging subsequence \cite{SormaniWei2004}. Therefore we consider the closed ball $\bar{B}_1(\tilde{p}_i)$ in $\widetilde{B_{4}(p_i)}$, which has a converging subsequence by relative volume comparison theorem. Then let $G_{i}$ be all deck transformations $g \in \pi_1(B_{4}(p_i))$ such that $d(g\tilde{p}_i,\tilde{p}_i) \le 1/100$. Passing to a subsequence if necessary, $$(\bar{B}_1(\tilde{p}_i),\tilde{p}_i,G_{i}) \overset{eGH}\to (\bar{B}_1(\tilde{p}),\tilde{p},G).$$ 

The limit $G$ is only a pseudo-group, that is, $gg'$ is not defined for some $g,g' \in G$. Therefore we can't directly apply Palais's theorem to the limit $G$ and $\bar{B}_1(\tilde{p})$. The following existence of a slice at $\tilde{p}$ is proved in \cite{PanWang}.

\begin{theorem}\label{t1}
Given $(\bar{B}_1(\tilde{p}_i),\tilde{p}_i,G_{i}) \overset{eGH}\to (\bar{B}_1(\tilde{p}),\tilde{p},G)$ as above, there is a slice $S$ at $\tilde{p}$ so that: \\
(1). $S$ contains $\tilde{p}$ and is $G_{\tilde{p}}$-invariant; \\
(2). $S/G_{\tilde{p}}$ is homeomorphic to a neighborhood $U$ of $p$.
\end{theorem}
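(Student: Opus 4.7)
The plan is to reduce the statement to Palais's classical slice theorem for compact group actions, applied to the isotropy $G_{\tilde{p}}$, while handling carefully the fact that the ambient $G$ is only a pseudo-group. As a first step I would show that $G_{\tilde{p}}$ is a compact Lie group. Each $G_i$ is a discrete group of isometries moving $\tilde{p}_i$ by at most $1/100$, so by an Arzel\`a--Ascoli argument the equivariant GH limit $G$ is relatively compact as a set of isometries of $\bar{B}_1(\tilde{p})$; standard results on limits of discrete isometry groups with uniformly bounded displacement on manifolds with Ricci lower bounds (Colding--Naber, Fukaya--Yamaguchi) give that $G$ carries the structure of a local Lie group near the identity. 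The closed subset $G_{\tilde{p}} = \{g \in G : g\tilde{p} = \tilde{p}\}$ is therefore a compact Lie subgroup acting by isometries.

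The second step is to construct a $G_{\tilde{p}}$-invariant candidate slice. I would choose $r>0$ small, let $V = \bar{B}_r(\tilde{p})$, and set $S = G_{\tilde{p}} \cdot V$; by construction $S$ is $G_{\tilde{p}}$-invariant and contains $\tilde{p}$. The crucial property to establish is tubular disjointness: after shrinking $r$ if necessary, whenever $g \in G$ satisfies $gS \cap S \ne \emptyset$ one has $g \in G_{\tilde{p}}$. The argument is by contradiction: otherwise one obtains a sequence $g_n \in G \setminus G_{\tilde{p}}$ witnessing the failure at scales $r_n \to 0$; extracting a subsequential limit $g_\infty \in G_{\tilde{p}}$, the local Lie group structure of $G$ near the identity then forces $g_n \in G_{\tilde{p}}$ for large $n$, a contradiction.

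Given tubular disjointness, the canonical map $\Phi : G \times_{G_{\tilde{p}}} S \to GS$, $[g,s] \mapsto gs$, is a continuous bijection: surjectivity is tautological, and injectivity follows from disjointness combined with the equivalence relation defining the fiber product. To upgrade $\Phi$ to a homeomorphism, and simultaneously to identify $S/G_{\tilde{p}}$ with a neighborhood of $p$, I would exploit the approximating manifolds: each $\bar{B}_1(\tilde{p}_i)/G_i$ is an open neighborhood of $p_i$ in $M_i$, and by equivariant GH convergence the quotient maps $\bar{B}_1(\tilde{p}_i) \to \bar{B}_1(\tilde{p}_i)/G_i$ converge to $\bar{B}_1(\tilde{p}) \to \bar{B}_1(\tilde{p})/G$. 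The limit quotient is therefore homeomorphic to a neighborhood $U$ of $p$ in $X$, and inverting the composition $S \to S/G_{\tilde{p}} \cong GS/G \hookrightarrow \bar{B}_1(\tilde{p})/G$ yields the required homeomorphism of part~(2) while also showing that $\Phi$ is open, giving part~(1).

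The principal difficulty is the pseudo-group nature of $G$: the standard proof of the slice theorem freely composes group elements and invokes global $G$-invariance, tools unavailable here. The workaround is to route everything through the equivariant GH convergence --- compositions in $G$ are realized as limits of compositions in the genuine groups $G_i$ on sufficiently small regions, and the quotient structure on a neighborhood of $\tilde{p}$ is transported from the smooth side rather than constructed intrinsically. A secondary subtlety is ensuring that $G_{\tilde{p}}$ is genuinely a \emph{Lie} subgroup, for which one appeals to the Colding--Naber regularity theory for limits of discrete isometry groups under Ricci lower bounds; without this the argument in the second paragraph would not close.
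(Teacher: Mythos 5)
Your proposed slice is wrong in exactly the cases that matter. You set $S = G_{\tilde{p}}\cdot \bar{B}_r(\tilde{p})$, which equals $\bar{B}_r(\tilde{p})$ since $G_{\tilde{p}}$ acts by isometries fixing $\tilde{p}$, and you then claim ``tubular disjointness'': after shrinking $r$, $gS\cap S\neq\emptyset$ forces $g\in G_{\tilde{p}}$. This fails whenever the $G$-orbit of $\tilde{p}$ is positive-dimensional. Concretely, take $M_i = S^1_{1/i}\times\mathbb{R}^{n-1}$ converging to $\mathbb{R}^{n-1}$; then $\widetilde{B_4(p_i)}$ opens up into $\mathbb{R}^n$, $\bar{B}_1(\tilde{p})$ is a flat ball, $G$ is a segment of translations near the identity, and $G_{\tilde{p}}=\{e\}$. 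Every small nontrivial translation $g$ satisfies $g\bar{B}_r(\tilde{p})\cap\bar{B}_r(\tilde{p})\neq\emptyset$ no matter how small $r$ is, so the disjointness property you need is false. Your contradiction argument (``a subsequential limit $g_\infty\in G_{\tilde{p}}$ forces $g_n\in G_{\tilde{p}}$ for large $n$'') is simply not a theorem: nothing about a local Lie-group structure near the identity makes the isotropy open in $G$. Such collapsing examples are the whole point of the paper, so the error is not peripheral. A correct slice must be transverse to the orbit $G\tilde{p}$, not a metric ball about $\tilde{p}$, and producing such a transversal is precisely the nontrivial content of the slice theorem.

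Separately, your first step invokes ``Colding--Naber, Fukaya--Yamaguchi'' to endow $G$ with a local Lie group structure. The paper does not prove Theorem \ref{t1} at all: it cites \cite{PanWang}, and the cited argument goes a different route. There, $G$ is extended to a genuine Lie group $\hat{G}$ acting on an enlarged space, Palais's slice theorem is applied to the honest $\hat{G}$-action, and one then checks that the extended objects are locally homeomorphic to the original ones, so the slice restricts. Making the extension work is exactly where the hard input enters: Lemma 2.1 of \cite{PanRong2018} (an isometry fixing a small ball fixes a larger one) and path-connectedness of the regular sets $(\mathcal{R}_k)_{\epsilon,\delta}$ from \cite{CheegerColding2000a,ColdingNaber2012}. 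Your sketch replaces this by hand-waving; even granting a local Lie structure, the slice construction itself still has to handle the nontrivial orbit direction, which is where your argument breaks.
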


By theorem \ref{t1}, we can lift any path in $U$ to a path in $S$ since $G_{\tilde{p}}$ is a compact Lie group; see \cite{Bredon}.

The idea to find the slice is that we extend $G$ to a Lie group $\hat{G}$, which acts homeomorphically on an extended space. Then we apply Palais's slice theorem \cite{Palais1961} for this $\hat{G}$-space and get a slice in the extended space. Next we show that the extended group and space are locally homeomorphic to the old ones, thus we can get the slice at $\tilde{p}$ as well.

To show that extended group $\hat{G}$ is well-defined, lemma 2.1 in \cite{PanRong2018} is the key. It says for any $0 < r < R$, if an action $g$ fixes all points in $B_r(p)$, then $g$ fixes $B_R(p)$ as well. In another word,  geometric structure in small scale  can weekly control the one in large scale, therefore we can extend $G$. Lemma 2.1 in \cite{PanRong2018} and the proof of that $\hat{G}$ is a Lie group \cite{PanWang} rely on certain path connectedness of $(\mathcal{R}_k)_{\epsilon, \delta}$ \cite{CheegerColding2000a, ColdingNaber2012}.

\section{Construction of the homotopy}

We prove theorem A in this section. 
As we mentioned in section 1, for any loop $\gamma$ in a small neighborhood of $X$, we want to find a loop $\gamma_i$ in $M_i$; $\gamma_i$ is close to $\gamma$ and has the following homotopy property.
\begin{lemma}\label{l1}
Fix $x \in \bar{B}_{1/2}(p)$ and $x_i  \in B_1(p_i)$ converging to $x$. For any $l<1/2$, passing to a subsequence if necessary, there exists $r=r(x,l)<l$ and $\epsilon_i \to 0$ so that for any loop $\gamma \subset B_r(x)$, we can find loops $\gamma_i$ and $\gamma_i'$ in $M_i$ satisfying the following conditions: \\
(1). $\gamma_i$ is close to $\gamma$ and the length of $\gamma_i'$ is less than $2\epsilon_i$; \\
(2). $\gamma_i$ is homotopic to $\gamma_i'$ and the homotopy image is contained in $B_{4l}(x_i)$.
\end{lemma}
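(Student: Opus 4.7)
The plan is to transfer the loop $\gamma$ to the universal cover of $B_{4l}(x_i)$, apply the slice theorem~\ref{t1} to the pseudo-group of deck transformations nearly fixing the base point, and then push back down: the isotropy element $g \in G_{\tilde{x}}$ along which $\gamma$ closes up in the slice will be approximated by $g_i \in G_i$, and the short loop $\gamma_i'$ will arise as the projection of a minimizing geodesic from $\tilde{x}_i$ to $g_i\tilde{x}_i$, which has length tending to $0$ because $g$ fixes $\tilde{x}$.

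For setup I would form, for each $i$, the universal cover $\pi_i : \widetilde{B_{4l}(x_i)} \to B_{4l}(x_i)$, a lift $\tilde{x}_i$ of $x_i$, and the pseudo-group $G_i \subset \pi_1(B_{4l}(x_i))$ of deck transformations with $d(g\tilde{x}_i, \tilde{x}_i) \le 1/100$. By relative volume comparison, a subsequence of $(\bar{B}_l(\tilde{x}_i), \tilde{x}_i, G_i)$ converges equivariantly to $(\bar{B}_l(\tilde{x}), \tilde{x}, G)$, and Theorem~\ref{t1} yields a $G_{\tilde{x}}$-slice $S$ at $\tilde{x}$ with $S/G_{\tilde{x}}$ homeomorphic to a neighborhood $U$ of $x$ in $X$. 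I then fix $r = r(x,l) < l$ so that $B_r(x) \subset U$; this depends only on $x$ and $l$, not on any particular loop.

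Given $\gamma \subset B_r(x)$, lift it through $U \cong S/G_{\tilde{x}}$ to a path $\tilde{\gamma}$ in $S$, which is possible since $G_{\tilde{x}}$ is a compact Lie group. Since $\gamma$ is a loop, $\tilde{\gamma}(1) = g\tilde{\gamma}(0)$ for some $g \in G_{\tilde{x}}$. Using the equivariant convergence, pick $g_i \in G_i$ with $g_i \to g$ and $\tilde{z}_i \in \bar{B}_l(\tilde{x}_i)$ converging to $\tilde{\gamma}(0)$, and construct by discretization a path $\tilde{\gamma}_i$ from $\tilde{z}_i$ to $g_i\tilde{z}_i$ in $\bar{B}_l(\tilde{x}_i)$ that is $\epsilon_i$-close to $\tilde{\gamma}$ with $\epsilon_i \to 0$. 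Let $\tilde{\gamma}_i'$ be a minimizing geodesic from $\tilde{x}_i$ to $g_i\tilde{x}_i$; its length equals $d(\tilde{x}_i, g_i\tilde{x}_i) \to d(\tilde{x}, g\tilde{x}) = 0$, so after enlarging $\epsilon_i$ if needed its length is $<2\epsilon_i$. Defining $\gamma_i = \pi_i(\tilde{\gamma}_i)$ and $\gamma_i' = \pi_i(\tilde{\gamma}_i')$ yields loops in $M_i$ satisfying (1).

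For (2), both $\gamma_i$ and $\gamma_i'$ realize the deck transformation $g_i$ (at different base points), so a free homotopy is the $\pi_i$-image of a disk in the simply connected $\widetilde{B_{4l}(x_i)}$ filling the quadrilateral with sides $\tilde{\gamma}_i$, a short arc $\tilde{z}_i \to \tilde{x}_i$, $\tilde{\gamma}_i'$ (reversed), and its $g_i$-translate. All four sides lie in $\bar{B}_{2l}(\tilde{x}_i)$, and with care the filling can be arranged so that its projection stays in $B_{4l}(x_i)$. The main obstacle I anticipate is precisely this last point: containing the homotopy image in $B_{4l}(x_i)$ requires that the filling disk remain in a controlled neighborhood of $\tilde{x}_i$ in the cover, which is exactly why the statement works with the $l$-ball in the cover but the $4l$-ball in $M_i$. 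The remaining work---building $\tilde{\gamma}_i$ from the equivariant Gromov--Hausdorff approximation, and a diagonal argument producing $\epsilon_i \to 0$ uniformly over all loops $\gamma \subset B_r(x)$---is essentially formal given Theorem~\ref{t1} and the setup above.
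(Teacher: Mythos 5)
Your proposal follows essentially the same route as the paper: pass to the universal cover of $B_{4l}(x_i)$, take the slice $S$ at $\tilde{x}$ from Theorem~\ref{t1}, lift $\gamma$ to a path in $S$ closing up under some $g \in G_{\tilde{x}}$, approximate $g$ by $g_i \in G_i$, build $\tilde{\gamma}_i$ by discretization, and take $\tilde{\gamma}_i'$ to be a short geodesic from $\tilde{x}_i$ to $g_i\tilde{x}_i$. The one point you flag as a ``main obstacle'' is actually automatic: any disk filling your quadrilateral lies in $\widetilde{B_{4l}(x_i)}$, which by definition projects into $B_{4l}(x_i)$, so no care is needed in choosing the filling --- the $l$ versus $4l$ ratio serves the equivariant convergence and slice-theorem setup, not the homotopy containment.
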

\begin{proof}
Consider $\bar{B}_{l}(\tilde{x}_i) \subset \widetilde{B_{4l}(x_i)}$ and $G_i= \{ g \in \Gamma_i | d(g\tilde{x}_i,\tilde{x}_i) \le l/100 \}$. By passing to a subsequence, we may assume 
$(\bar{B}_{l}(\tilde{x}_i), \tilde{x}_i, G_i)$ converges to $(\bar{B}_{l}(\tilde{x}),\tilde{x},G)$.
There exists a sequence of $\epsilon_i \to 0$ so that the equivariant Gromov-Hausdorff distance between $(\bar{B}_{l}(\tilde{x}_i), \tilde{x}_i, G_i)$ and $(\bar{B}_{l}(\tilde{x}),\tilde{x},G)$ is less than $\epsilon_i$. 

By theorem \ref{t1}, we can find a slice $S$ at $\tilde{x}$ such that $S/G_{\tilde{x}}$ is homeomorphic to a neighborhod $U$ of $x$. We may assume $r$ small enough such that $B_{r}(x) \subset U$. Given a loop $\gamma$ in $B_r(x)$, we construct $\gamma_i$ and $\gamma_i'$ satisfying the given conditions. 

First assume that $\gamma$ is based on $x$. Since $\gamma$ is contained in $B_{r}(x)$ and $G_{\tilde{x}}$ fixes $\tilde{x}$, $\gamma$ can be lifted to a loop $\tilde{\gamma}$ in $S$. We now construct a loop $\tilde{\gamma}_i$ based on $\tilde{x}_i$ and $\tilde{\gamma}_i$ is $5\epsilon_i$-closed to $\tilde{\gamma}$; see also \cite{PanWei2019}.

Since $\tilde{\gamma}:[0,1] \to B_l(\tilde{x})$ is equally continuous, there exists a large integer $N$ such that 
$\tilde{\gamma}([j/N,(j+1)/N])$ is contained in a $\epsilon_i$ ball
for each $0 \le j \le N-1$. For each $0 \le j \le N-1$, we can choose $q_j \in B_l(\tilde{x}_i)$ such that $d(\tilde{\gamma}(j/N),q_j) \le \epsilon_i$. Let $q_N=q_0$. Since $\tilde{\gamma}$ is a loop, $d(\tilde{\gamma}(1),q_N)= d(\tilde{\gamma}(0), q_0) \le \epsilon_i$. Then we connect $q_j$ to $q_{j+1}$ by a minimal geodesic for each $j$ and get
a loop since $q_0=q_N$. 

Re-parameterize this loop. It's direct to check $\tilde{\gamma}_i$ is $5\epsilon_i$-close to $\tilde{\gamma}$. Abusing the notation, let $\pi$ denotes quotient maps $\widetilde{B_{4l}(x_i)} \to B_{4l}(x_i)$ and $S \to S/G_{\tilde{p}}$.
 Then $\gamma_i:= \pi (\tilde{\gamma}_i)$ is $6\epsilon_i$-closed to $\gamma=\pi(\tilde{\gamma})$. Also $\gamma_i$ is contractible in $B_{4l}(x_i)$ since $\tilde{\gamma}_i$ is a loop in the  universal cover of $B_{4l}(x_i)$ . In this case, $\gamma_i'$ is a constant loop.

In general cases, we may assume that $\gamma$ is based on $z \in B_r(x)$. Lift $\gamma$ to a path $\tilde{\gamma}$ in $S$; $\tilde{\gamma}$ may not be a loop. Assume $\tilde{\gamma}$ is a path from $\tilde{z}$ to $g\tilde{z}$ where $\tilde{z} \in S$, $g \in G_{\tilde{x}}$ and $\pi(\tilde{z})=z$. We can find $\tilde{z}_i \in \bar{B}_{l}(\tilde{x}_i)$ $\epsilon$-close to $\tilde{z}$ and $g_i \in G$ $\epsilon$-close to $g$. Then $g_iz_i$ is $2\epsilon_i$-close to $gz$. By a similar construction above, we can find a path $\tilde{\gamma}_i$ from $\tilde{z}_i$ to $g_i\tilde{z}_i$, which is $10\epsilon_i$-close to $\tilde{\gamma}$. So the loop $\gamma_i:=\pi(\tilde{\gamma}_i)$ is $11\epsilon_i$-close to $\gamma$.

Let $\tilde{\gamma}_i'$ be a minimal geodesic from $\tilde{x}_i$ to $g_i\tilde{x}_i$. $d(\tilde{x}_i, g_i\tilde{x}_i) \le 2 \epsilon_i$ since $g_i$ is $\epsilon_i$-close to $g$ and $g\tilde{x}=\tilde{x}$. Therefore the length of loop $\gamma_i':=\pi(\tilde{\gamma}_i')$ is  less than $2\epsilon_i$. $\gamma_i=\pi(\tilde{\gamma}_i)$ is homotopic to the loop $\gamma_i'=\pi(\tilde{\gamma}_i')$ since both of them correspond to the deck transformation $g_i$. The homotopy image is contained in $B_{4l}(x_i)$ since we are considering the universal cover of $B_{4l}(x_i)$.
\end{proof}

The homotopy map between $\gamma_i$ and $\gamma_i'$ may not converge as $i \to \infty$. However, by lemma \ref{lemma3} below, we can use this homotopy to decompose $\gamma$ into some loops and each new loop is contained in a smaller ball; see also \cite{PanWei2019}. 
\begin{lemma}\label{lemma3}
Fix $l>0$ and $x \in \bar{B}_{1/2}(p)$, choose $r=r(x,l)$ so that lemma \ref{l1} holds. For any loop $\gamma$ in $B_{r}(x)$ and any number $j$, assuming $i$ is large enough, there exists a triangular decomposition $\Sigma$ of $D$ and a continuous map 
\begin{equation*}
H:K^1 \to B_{5l}(x),
\end{equation*}
where $K^1$ is the $1$-skeleton of $\Sigma$, so that \\
(1) $diam(\Delta)<1/j$, $diam(H(\partial \Delta)) \le 100\epsilon_i$ for any triangle $\Delta$ of $\Sigma$;\\
(2) $H |_{\partial D} = \gamma$.\\
In particular, $d(H(z), x) \le 5l$ for all $z \in K^1$. 
\end{lemma}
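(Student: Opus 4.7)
The plan is to transport the homotopy produced by Lemma~\ref{l1} from $M_i$ back to $X$ on the $1$-skeleton of a sufficiently fine triangulation of $D$. Because $H$ is only required on $K^1$, we do not need $\gamma_i'$ to bound a disk in $M_i$; the ``interior'' portion of $D$ can simply be collapsed onto an $O(\epsilon_i)$-neighborhood of the $X$-image of $\gamma_i'$, with no topological obstruction.

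First, apply Lemma~\ref{l1} to obtain $\gamma_i,\gamma_i'\subset B_{4l}(x_i)$ and a homotopy $F_i\colon\bar A\to B_{4l}(x_i)$, where $\bar A$ is identified with the closed annulus $\{1/2\le|z|\le 1\}\subset D$ so that $F_i$ restricts to $\gamma_i$ on $\partial D$ and to $\gamma_i'$ on $\{|z|=1/2\}$. Fix an $\epsilon_i$-GH approximation $\phi_i\colon B_{4l}(x_i)\to B_{4l+\epsilon_i}(x)$ (enlarging $\epsilon_i$ if necessary so it dominates the ambient GH-error between $M_i$ and $X$). Choose a triangulation $\Sigma$ of $D$ fine enough that every triangle has diameter $<1/j$, the oscillation of $F_i$ on any triangle meeting $\bar A$ is $<\epsilon_i$ (by uniform continuity of $F_i$), and the oscillation of $\gamma$ on any boundary edge of $\Sigma$ is $<\epsilon_i$ (by uniform continuity of $\gamma$). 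Then define $H$ on vertices by $H(v)=\gamma(v)$ for $v\in\partial D$, $H(v)=\phi_i(F_i(v))$ for the remaining vertices in $\bar A$, and $H(v)=\phi_i(\gamma_i'(0))$ for vertices strictly inside $\{|z|<1/2\}$. Extend $H$ to each boundary edge by $\gamma$ itself, and to every other edge by a minimal geodesic in $X$ (which exists since Ricci limits are geodesic).

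The triangulation conditions and the boundary condition $H|_{\partial D}=\gamma$ hold by construction. Every vertex image lies in $B_{4l+\epsilon_i}(x)\subset B_{5l}(x)$ for large $i$, and every interior edge is a geodesic of length $O(\epsilon_i)$, hence also stays inside $B_{5l}(x)$. For any triangle $\Delta$, its three vertex images are pairwise $O(\epsilon_i)$-close: inside $\bar A$ this follows from the $F_i$-oscillation bound combined with the GH-error of $\phi_i$; inside the inner disk it follows from $diam(\gamma_i')<2\epsilon_i$; and for triangles touching $\partial D$ it uses the $\gamma$-oscillation bound together with the fact from Lemma~\ref{l1}(1) that $\gamma$ is close to $\gamma_i=F_i(\cdot,0)$ up to $O(\epsilon_i)$. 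Each edge-image then stays within $O(\epsilon_i)$ of its endpoints, so $diam(H(\partial\Delta))\le 100\epsilon_i$ with considerable slack. The main technical point is that the modulus of continuity of $F_i$ depends on $i$, so $\Sigma$ must be chosen after passing to large $i$; this is precisely the flexibility afforded by the ``assuming $i$ is large enough'' clause in the statement.
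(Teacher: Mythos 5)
Your proposal is correct and follows essentially the same strategy as the paper: use the homotopy from Lemma~\ref{l1} on an annulus collapsing $\gamma_i$ to the short loop $\gamma_i'$, fill the inner disk by a constant, pick a triangulation fine enough relative to the (i-dependent) modulus of continuity of that homotopy, and transport vertex values back to $X$ via a GH approximation, joining them by geodesics on interior edges. The only cosmetic difference is that the paper first builds the auxiliary map $H'$ on all of $D$ and triangulates $D_1$ and $\bar D_2$ separately with matching vertices on $|z|=1/2$, whereas you triangulate $D$ directly and set interior vertices to $\phi_i(\gamma_i'(0))$; the estimates are the same.
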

\begin{proof}
By lemma \ref{l1}, there exists a loop $\gamma_i$, in $M_i$, $11\epsilon_i$-close to $\gamma$; in $B_{4l}(x_i)$, $\gamma_i$ is homotopic to a short loop $\gamma_i'$; the length of $\gamma_i'$ is less than $2\epsilon_i$. 

We first construct a map $H'$ from $D$ to $B_{4l}(x_i)$. Define
\begin{equation*}
D_1 = \{ (x,y) \in \mathbb{R}^2 | 1 \ge x^{2}+y^{2} \ge 1/4 \}\ , D_2=D-D_1.
\end{equation*}
Then $S_1= \{ (x,y) \in D | x^2 + y^2=1 \}$ and $S_{1/2}=  \{ (x,y) \in D | x^2 + y^2=1/4 \}$ are boundaries of $D_1$.

Define $H' |_{S_1} = \gamma_i$ and $H' |_{S_{1/2}} = \gamma_i'$. Since  $\gamma_i$ is homotopic to $\gamma_i'$ in $B_{4l}(x_i)$, we can extend $H'$ to a continuous map from $D_1$ to $B_{4l}(x_i)$. Define $H'(D_2) =x_i$. $H'$ is continuous except at $S_{1/2}$. Now we give $D_1$ a triangular decomposition $\Sigma^1$ such that $diam(\Delta)<1/j$ and $diam(H'(\partial \Delta)) \le \epsilon_i$ for any triangle $\Delta$ of $\Sigma^1$. We can also give a triangular decomposition $\Sigma^2$ of $\bar{D}_2$, the closure of $D_2$, such that $diam(\Delta)<1/j$, $diam(H'(\partial \Delta)) \le 4\epsilon_i$ for any triangle $\Delta$ of $\Sigma^2$; this decomposition exists since $\gamma_i'$ is contained in $B_{2\epsilon_i}(x_i)$ and $H'(D_2)=x_i$. We may add some vertices in both decompositions so that $\Sigma^1$ and $\Sigma^2$ have same vertices on the circle $S_{1/2}$. Then their union $\Sigma= \Sigma_1 \cup \Sigma_2$ is a triangular decomposition of $D$ such that $diam(\Delta)<1/j$ and $diam(H'(\partial \Delta)) \le 4\epsilon_i$ for any triangle $\Delta$ of $\Sigma$.

Let $K^1$ be the $1$-skeleton of $\Sigma$. Now we construct a continuous map $H:K^1 \to B_{5l}(x)$; see also \cite{PanWei2019}. Let $K^0$ be all vertices on $\Sigma$. If $v \in K^0$ is on $\partial D$, $H(v)$ is defined by $H |_{\partial D}= \gamma$ (in particular, $d(H(v),H'(v)) \le 11 \epsilon_i$); otherwise define $H(v)$ be a point in $\bar{B}_l(x)$ such that
$d(H(v),H'(v)) < 2\epsilon_i$.

For any two points $v$ and $u$ in $K^0$ connected by an edge $e$, if $e$ is a part of the $\partial D$, then $H$ is defined on $e$ since $H |_{\partial D}= \gamma$; otherwise we define $H |_{e}$ be the minimal geodesic between $H(v)$ and $H(u)$. Since we always have
$$ d(H(v),H(u)) \le d(H(v),H'(v)) + d(H'(v),H'(u)) + d(H'(u),H(u)) \le 30 \epsilon_i,$$ 
the image of $H |_e$ is contained in $B_{30\epsilon_i}(H(v))$. Therefore $diam(H(\partial \Delta)) \le 100\epsilon_i$ for any triangle $\Delta$ of $\Sigma$. 

Finally we show  $H(K^1) \subset B_{5l}(x)$. By lemma \ref{l1} and the construction of $H'$, $H'(D) \subset B_{4l}(x_i)$. Since $d(H(v),H'(v)) < 11\epsilon_i$ for any $v \in K^0$ and $diam(H(\partial \Delta)) \le 100\epsilon_i$ for any triangle $\Delta$, the image of $H$ is contained in $B_{4l+120\epsilon_i}(x)$. We may assume $i$ is large enough so that $120\epsilon_i < l$, then $H(K^1)$ is contained in $B_{5l}(x)$.
\end{proof}

We will use the following lemma in \cite{PanWei2019}.
\begin{lemma}\label{lemma2}
For any $j \ge 10$, let $\Sigma_j$ be a finite triangular decompositions of unit disc $D$ with the conditions below: \\
(1). each $\Sigma_{j+1}$ is a refinement of $\Sigma_j$; \\
(2). $diam(\Delta) \le 1/j$ for every triangle $\Delta$ of $\Sigma_j$. \\
Suppose that we have a sequence of continuous maps $H_j : K^1_j \to B_{1/3}(p)$, where $K^1_j$ is the $1$-skeleton of $\Sigma_j$, such that for all $j \ge 10$, \\
(3). $H_{j+1}|_{K^1_j} = H_j$; \\
(4). for any $z \in K^1_{j+1}-K^1_j$, $d(H_{j+1}(z),H_j(u)) \le  1/2^j$ holds for all $u$ in the boundary of $\Delta$, where $\Delta$ is a triangle of $\Sigma_k$ containing $z$. \\
Then $H_j$ converges to a continuous map $H_{\infty} : D \to \bar{B}_{1/3}(p)$. 
\end{lemma}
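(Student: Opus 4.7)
My plan is to use the nested structure to define $H_\infty$ on the dense union $K_\infty := \bigcup_j K^1_j \subset D$ by the unambiguous prescription $H_\infty(z) = H_j(z)$ for any $j$ with $z \in K^1_j$, which is consistent by (3), and then extend $H_\infty$ continuously to all of $D$. Since triangles of $\Sigma_j$ have diameter at most $1/j$, the set $K_\infty$ is dense in $D$, and $\bar{B}_{1/3}(p)$ is complete as a closed subset of a complete metric space, so a continuous extension will exist as soon as $H_\infty$ is shown to be uniformly continuous on $K_\infty$.

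The key telescoping estimate I would establish is: for every $j \ge 10$, every triangle $\Delta$ of $\Sigma_j$, and every vertex $z \in K^0_k \cap \Delta$ with $k \ge j$, there is a vertex $v_\ast$ of $\Delta$ satisfying
\begin{equation*}
d\bigl(H_\infty(z), H_j(v_\ast)\bigr) \le \sum_{m=j}^{k-1} 2^{-m} < 2^{1-j}.
\end{equation*}
I would prove this by induction on $k$. The base case $k=j$ is immediate with $v_\ast = z$. For the inductive step, any new vertex $z \in K^0_{k+1}\setminus K^0_k$ lying in $\Delta$ is contained in some triangle $\Delta' \in \Sigma_k$ with $\Delta' \subset \Delta$ (using that $\Sigma_k$ refines $\Sigma_j$); condition (4) applied at level $k$ gives $d(H_{k+1}(z), H_k(u)) \le 2^{-k}$ for every vertex $u$ of $\Delta'$; choosing such a $u \in K^0_k \cap \Delta$ and invoking the inductive hypothesis on $u$ then adds the expected $\sum_{m=j}^{k-1} 2^{-m}$ to the error. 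A parallel argument, applied to any pair of vertices of $\partial\Delta$ routed through a single new point eventually introduced inside $\Delta$, yields the oscillation bound $\mathrm{diam}\,H_j(K^0_j \cap \partial\Delta) \le 2^{2-j}$.

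Combining these two estimates with the built-in continuity of each $H_j$ on the 1-skeleton $K^1_j$ (which controls values on edges by their endpoints), whenever $z, w \in K_\infty$ lie in a common or adjacent triangle of $\Sigma_j$ one obtains $d(H_\infty(z), H_\infty(w)) = O(2^{-j})$; since any two points of $K_\infty$ at distance $\le 1/j$ share such a triangle, $H_\infty$ is uniformly continuous on $K_\infty$ and extends uniquely to a continuous map $H_\infty : D \to \bar{B}_{1/3}(p)$ whose restriction to $K^1_j$ recovers $H_j$. The main technical obstacle I anticipate is that condition (4) only directly constrains vertices created between successive levels, so a triangle that is not immediately subdivided carries no obvious oscillation bound at its own level; this is resolved by tracking each triangle $\Delta \in \Sigma_j$ to the first later level $l$ at which it must be subdivided (existence forced by the diameter condition (2) once $1/l < \mathrm{diam}(\Delta)$) and propagating the estimate back through the compatibility (3). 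This is essentially bookkeeping once the induction above is set up correctly.
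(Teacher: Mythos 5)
The paper itself gives no proof here—this lemma is cited directly from \cite{PanWei2019}, so I assess your proposal on its own. The overall strategy (define $H_\infty$ on the dense skeleton union $K_\infty$, prove uniform continuity via a telescoping estimate plus a per-triangle oscillation bound, extend by completeness of $\bar{B}_{1/3}(p)$) is the natural one and surely close to the intended proof.

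Two steps, however, have gaps as written. First, your telescoping induction applies hypothesis (4) to every new vertex $z \in K^0_{k+1}\setminus K^0_k$, but (4) is a statement about $K^1_{k+1}-K^1_k$: a new vertex created on an already-present edge of $\Sigma_k$ lies in $K^1_k$ and so is outside the scope of (4). For such $z$ you must instead invoke (3), which gives $H_{k+1}(z)=H_k(z)$, together with the oscillation bound on the boundary of the level-$k$ triangle containing $z$; this perturbs the telescoping constants but not the convergence. A cleaner choice is to run the induction over $K^1_k\cap\Delta$ rather than $K^0_k\cap\Delta$, which matches (4) exactly. Second, the appeal to ``the built-in continuity of each $H_j$ on the $1$-skeleton (which controls values on edges by their endpoints)'' is not a legitimate step in proving the lemma as an abstract statement: continuity of $H_j$ along an edge gives no bound on its oscillation there. (It would hold if $H_j$ mapped new edges to minimizing geodesics, as in the construction of Lemma~\ref{lemma3}, but the present lemma assumes no such thing.) The correct fix is already at hand: (4) quantifies over \emph{all} $u\in\partial\Delta$, not only the vertices, so the oscillation bound should be proved over the entire boundary, $diam\,H_j(\partial\Delta)\le 2^{1-j}$, after which every point of $K^1_k\cap\Delta$ routes through a vertex of its level-$k$ subtriangle at cost $O(2^{-k})$ and no separate control along edges is needed. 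With these two corrections the argument closes.
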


%We use the same notation in lemma \ref{l1}, the following lemma shows that, given a "homotopy" $H$ from $\gamma_i$, how to find a decomposition and define $H'$ on this $1$-skeleton while $H' |_{\partial D}$ is $\gamma$ and the image is controlled. 

Now we can prove  Theorem A. The basic idea is to decompose a given loop into many loops and each new loop is contained in a smaller ball (lemma \ref{l1} and \ref{lemma3}). Then repeat the above process to decompose new loops. By induction, we get a homotopy map by lemma \ref{lemma2}.

\begin{proof}
Let $l_j=1/2^j$. For a fixed $j \ge 10$, although $r(x,l_j)$ in lemma \ref{l1} depends on the choice of base point $x$, we can find a finite set $S(l_j) \subset \bar{B}_{1/2}(p)$ so that $\{ B_{r(x,l_j)}(x)| x \in S(l_j) \}$ covers $\bar{B}_{1/2}(p)$.  Therefore there exists $r_j< 1/2^j$ such that for any $z \in \bar{B}_{1/3}(p)$, $B_{r_j}(z)$ is contained in one of $B_{r(x,l_j)}(x)$ where $x \in S(l_j)$.    

Let's start with a fixed integer $J \ge 10$. We will show that any loop  $\gamma$ in $B_{r_J}(p)$ is contractible in $B_{2^{-J+4}}(p)$. 

Since $B_{r_J}(p) \subset  B_{r(x,l_J)}(x)$ for some $x \in S(l_J)$, $\gamma$ is contained in $B_{r(x,l_J)}(x)$. Use lemma \ref{l1} and \ref{lemma3} for $\gamma$ and $B_{r(x,l_J)}(x)$. We may  choose large $i$ such that $\epsilon_i<r_{J+1}/100$. Then we can find a triangular decomposition $\Sigma_1$ of $D$ and a continuous map on the $1$-skeleton $K^1_1$,
\begin{equation*}
H_1:K_1^1 \to \bar{B}_{5l_J}(x)
\end{equation*} 
with the following properties: \\
(1A) $diam(\Delta)<1/(J+1)$ and $diam(H_1(\partial \Delta)) \le 100\epsilon_i < r_{J+1}$ for any triangle $\Delta$ of $\Sigma_1$;\\
(1B) $H_1(\partial D)= \gamma$, $d(H_1(z), q)
 \le d(H_1(z), x) + d(x,q) \le 5l_J + r(x,l_J) \le 6 \times 2^{-J} \le 2^{-J+3}$ for any $z \in K_1^1$ and $q$ in $\gamma$. \\

Now we consider the same procedure for loop $H_1(\partial \Delta)$ where $\Delta$ is a triangle in $\Sigma_1$.  Since $diam(H_1(\partial \Delta)) \le r_{J+1}$ by (1A),  $H_1(\partial \Delta)$ is contained in $B_{r(x',l_{J+1})}(x')$ for some $x' \in S(l_{J+1})$. Assume $x_i' \in B_1(p_i)$ converges to $x'$. Use lemma \ref{l1} and lemma \ref{lemma3} for $H_1(\partial \Delta)$ and choose $i$ large enough such that $\epsilon_i < r_{J+2}/100$ , we can find a triangular decomposition $\Sigma_{2,\Delta}$ of $\Delta$ and a continuous map on $1$-skeleton $K^1_{2,\Delta}$, 
\begin{equation*}
H_{2,\Delta}:K^1_{2,\Delta} \to \bar{B}_{5l_{J+1}}(x')
\end{equation*}
such that \\
(2A) $diam(\Delta')<1/(J+2)$, $diam(H_{2,\Delta}(\partial \Delta')) \le 100\epsilon_i < r_{J+2}$ for any triangle $\Delta'$ of $\Sigma_{2,\Delta}$; \\
(2B) $H_{2,\Delta} |_{\partial \Delta} = H_1 |_{\partial \Delta}$, and 
\begin{equation*}
d(H_{2,\Delta}(z), H_1(u)) \le d(H_{2,\Delta}(z),x') + d(x', H_1(u)) \le 5l_{J+1} + r(x',l_{J+1}) \le 2^{-J+2}
\end{equation*}
for all $z \in K^1_{2,\Delta}$ and $u \in \partial \Delta$. Do this for any triangle $\Delta$ of $\Sigma_1$, we get $\Sigma_2$, a triangular decomposition of $D$ which refines $\Sigma_1$. We also get a continuous map $H_2: K_2^1 \to B_{1}(p)$ such that 
\begin{equation*}
d(H_{2}(z), H_1(u)) \le 2^{-J+2}
\end{equation*} 
for all $z \in K^1_{2}-K^1_1$ and all $u \in \partial \Delta$ where $\Delta$ is a triangle of $\Sigma_1$ containing $z$.

Repeat the above process, we can find a sequence of triangular decomposition $\Sigma_j$ and continuous maps $H_k$ on $K_k^1$ such that  \\
(kA) $diam(\Delta)<1/(J+k)$, $diam(H_k(\Delta)) < r_{J+k}$, for any triangle $\Delta$ of $\Sigma_k$; $\Sigma_{k}$ is a refinement of $\Sigma_{k-1}$; \\
(kB) $H_{k} |_{K_{k-1}^1} = H_{k-1}$, and 
\begin{equation*}
d(H_{k}(z), H_{k-1}(u)) \le 2^{-J-k+4}
\end{equation*}
for all $z \in K^1_{k}$ and all $u \in \partial \Delta$ where $\Delta$ is a triangle of $\Sigma_{k-1}$ containing $z$. 

To continue the above process, we must show that the image of $H_k$ can't leave $B_{1/3}(p)$ where we can apply lemma \ref{l1}. However, by (kB) we have 
\begin{equation}\label{e1}
d(H_{k}(z),p) \le \sum_{j=1}^{k} 2^{-J-j+4}< 2^{-J+4}
\end{equation}  
for all $z \in K^1_{k}$.
Then the image of $H_{k}$ is always contained in $B_{2^{-J+4}}(p)$. 

By (kA) and (kB), $H_k$ satisfies conditions in lemma \ref{lemma2}. Therefore  $H_k$ converges to a continuous map $H_{\infty}: D \to B_{1/3}(p)$. Actually the image of $H_{\infty}$ is contained in $B_{2^{-J+4}}(p)$ due to the inequality (\ref{e1}).
In particular, $\gamma$ is contractible in $B_{2^{-J+4}}(p)$ by $H_{\infty}$. Therefore $\rho(r_J,p) \le 2^{-J+4}$. Let $J \to \infty$ we get $\lim_{t \to 0} \rho(t,p) =0$. 
\end{proof}

\section{Generalized Margulis lemma in Ricci limit spaces}

Let's recall the main theorem in \cite{KapovitchWilking2011}.

\begin{theorem}\label{KW}
In each dimension $n$ there are positive constants $C(n)$ and $\epsilon(n)$ such that the following holds for any complete n dimensional Riemannian manifold $(M, g)$ with $Ric \ge -(n-1)$ on a metric ball
$B_1(p) \subset M$. The image of the natural homomorphism
$$ \pi_1(B_{3\epsilon}(p), p) \to \pi_1(B_1(p), p)$$
 contains a nilpotent subgroup $N$ of index $\le C$. Moreover, $N$ has a nilpotent basis
of length at most $n$. 
\end{theorem}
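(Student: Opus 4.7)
The plan is to follow the strategy of Kapovitch and Wilking: argue by contradiction using equivariant Gromov–Hausdorff convergence of universal covers of the ball, identify the limit group as a Lie group, and deduce nilpotence from the Zassenhaus–Margulis phenomenon that short commutators of short elements become rapidly shorter. First I would set up the contradiction. Suppose the conclusion fails for some $n$. Then there exist $n$-manifolds $(M_i,p_i)$ with $Ric \ge -(n-1)$ on $B_1(p_i)$ and $\epsilon_i \to 0$ so that the image $\Gamma_i$ of $\pi_1(B_{3\epsilon_i}(p_i),p_i) \to \pi_1(B_1(p_i),p_i)$ admits no nilpotent subgroup of index $\le i$ and nilpotent length $\le n$. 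Lift $p_i$ to $\tilde{p}_i$ in the universal cover $\widetilde{B_1(p_i)}$ and view $\Gamma_i$ as deck transformations, generated by loops of length $< 3\epsilon_i$. By relative volume comparison, a subsequence gives an equivariant pointed convergence
$$(\bar{B}_{1/2}(\tilde{p}_i),\tilde{p}_i,\Gamma_i) \to (Y,\tilde{p},G),$$
with $Y$ a Ricci limit space and $G$ a closed pseudo-group of isometries whose generators all fix $\tilde{p}$ (since the generator displacements tend to zero).

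Next I would establish the core technical statement: $G$ is locally a Lie group. The plan is to verify a no-small-subgroups condition on $G$ and invoke Gleason–Yamabe. The necessary input is Cheeger–Colding regularity — existence of many $(k,\epsilon)$-regular points whose tangent cones split as $\mathbb{R}^k \times C$ and the $\epsilon$-regular sets are weakly path connected — plus the rigidity lemma that an isometry pointwise fixing a small open set must be the identity on a much larger set (the analogue of Lemma 2.1 of \cite{PanRong2018} used in Theorem \ref{t1}). From these one shows that any nontrivial closed subgroup of $G$ displaces some point by a definite amount, so $G$ has no small subgroups near the identity and is a Lie group.

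The nilpotent structure and bound on length then come from a quantitative Zassenhaus argument. If $g_1,\dots,g_m \in \Gamma_i$ are generators with $d(g_j\tilde{p}_i,\tilde{p}_i) < 3\epsilon_i$, then commutators $[g_j,g_k]$ displace $\tilde{p}_i$ by $O(\epsilon_i^2)$ on scales where curvature is almost zero. Iterating, an $n+1$-fold commutator displaces $\tilde{p}_i$ by so little that, by the no-small-subgroups property established above, its limit in $G$ is trivial, forcing the commutator itself to vanish in $\Gamma_i$ for large $i$; this yields a finite-index subgroup of $\Gamma_i$ with nilpotent length $\le n$. The index bound $C(n)$ comes from the component group $G/G_0$ acting faithfully on a tangent object of dimension $\le n$, hence of order bounded by a Jordan-type constant depending only on $n$. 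The induction is on the dimension of the orbit $G\tilde{p}$: the Cheeger–Colding splitting theorem gives a factor $\mathbb{R}^k$ along which $G_0$ acts by translations, and one quotients to lower the effective dimension and close the induction.

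The main obstacle will be the Lie group structure of $G$. Unlike the Alexandrov setting, where convex geometry delivers a local slice directly, Ricci limits allow wild metric behavior and $G$ is only a pseudo-group, so one must simultaneously control the action on many scales via the Colding–Naber segment inequality and the rescaling invariance of regular tangent cones; this is the heart of the original proof. A secondary difficulty is choosing the generating set of $\Gamma_i$ efficiently — one needs a short basis algorithm producing generators of near-minimal length subject to not already lying in the subgroup generated by previous ones — so that the Zassenhaus iteration actually terminates within $n$ steps rather than drifting, and so that the nilpotent subgroup produced is of controlled index.
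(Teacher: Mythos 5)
This statement is not proved in the paper at all: it is quoted verbatim as the main theorem of Kapovitch--Wilking \cite{KapovitchWilking2011}, and the paper's ``proof'' is simply the citation. Your proposal instead tries to sketch a proof of the Kapovitch--Wilking theorem itself, and as it stands it is a roadmap with genuine gaps rather than a proof. The most serious one is the Zassenhaus step: under only $Ric \ge -(n-1)$ there is no control of the geometry (no two-sided curvature bound, no injectivity radius bound) at the scale $\epsilon_i$ of the generators, so the claim that commutators of short elements displace $\tilde{p}_i$ by $O(\epsilon_i^2)$ has no justification --- that quadratic commutator estimate is exactly the sectional-curvature Margulis/Zassenhaus phenomenon, and its failure under mere Ricci lower bounds is the reason the Kapovitch--Wilking proof is long and proceeds instead by an induction over a chain of rescaled limits and structure results for limits of group actions. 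Equally problematic is the inference ``the $(n+1)$-fold commutator limits to the identity in $G$, $G$ has no small subgroups, hence the commutator is trivial in $\Gamma_i$ for large $i$'': nontrivial deck transformations routinely converge to the identity (that is precisely what happens in collapse), so triviality in the limit forces nothing about $\Gamma_i$ without an additional gap-type lemma, which you do not supply.

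The remaining ingredients are likewise asserted rather than established: the Lie group structure of the limiting (pseudo-)group is itself a deep theorem (Cheeger--Colding/Colding--Naber), and deducing it from a ``no small subgroups'' claim via the rigidity lemma of \cite{PanRong2018} would need the full machinery of \cite{PanWang}, which is developed only for the specific equivariant limits in Theorem \ref{t1}; the index bound $C(n)$ via a Jordan-type bound for $G/G_0$ ``acting on a tangent object'' is not a valid argument as stated, since that action need be neither faithful nor linear; and the induction on the dimension of the orbit $G\tilde p$ is named but not carried out. In short, your outline correctly identifies the shape of the original argument, but each of its pivotal steps is either missing or would fail as written; within this paper the correct move is simply to invoke \cite{KapovitchWilking2011}, which is what the author does, and then prove only the new local $\pi_1$-surjectivity statement (Lemma \ref{l4}) needed to pass the conclusion to the limit space.
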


Now we fix $n$. There exist $C$ and $\epsilon$  such that theorem \ref{KW} holds. We will show Theorem B holds for this pair of $C$ and $\epsilon$. Without losing of generality, we may choose $x=p$ in Theorem B. 

Let $(M_{i},p_{i})$ be a sequence of complete $n$-manifolds converging to $(X,p)$ and $Ric \ge -(n-1)$ in $M_i$.
let $I$ be the image of the natural homomorphism
$$ \pi_1(B_{\epsilon}(p), p) \to \pi_1(B_1(p), p).$$
To prove Theorem B, we only need to show $I$ contains a nilpotent subgroup $N$ of index less than $C$.

Let $I_i$ be the image of  
$$\pi_1(B_{2\epsilon}(p_i), p_i) \to \pi_1(B_{1-\epsilon}(p_i), p_i),$$
$I'$ be the image of $$\pi_1(B_{3\epsilon}(p), p) \to \pi_1(B_1(p), p).$$
By theorem \ref{KW} and a rescaling argument, $I_i$ has a nilpotent subgroup $N_i$ of index less than $C$. Then we only need the following lemma \ref{l4}, which is a local version of $\pi_1$-onto property. Although the proof of lemma \ref{l4} can be found in \cite{PanWei2019, Tuschman1995}, we give it here for reader's convenience.

\begin{lemma} \label{l4}
When $i$ is large enough, there exists a group homomorphism $$\Phi_i:I_i \to I'$$ and the image of $\Phi_i$ contains $I$.
\end{lemma}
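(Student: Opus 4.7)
\medskip

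The plan is to define $\Phi_i$ by taking a loop $\gamma_i \subset B_{2\epsilon}(p_i)$, approximating it by a close loop $\gamma \subset B_{3\epsilon}(p)$, and setting $\Phi_i([\gamma_i]) := [\gamma] \in I'$. Concretely, using the Gromov--Hausdorff approximation, for each representative $\gamma_i : S^1 \to B_{2\epsilon}(p_i)$ I would pick a fine partition $0 = t_0 < t_1 < \cdots < t_N = 1$, choose points $q_j \in X$ with $d(q_j, \gamma_i(t_j))$ less than the GH error $\delta_i$, and join consecutive $q_j$'s by minimal geodesics in $X$. For $i$ large, $\delta_i$ is tiny, so the resulting loop lies in $B_{3\epsilon}(p)$ and passes to a class in $I'$.

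The main obstacle, and the content of the lemma, is well-definedness: if $\gamma_i$ and $\gamma_i'$ are homotopic inside $B_{1-\epsilon}(p_i)$, then the approximating loops $\gamma, \gamma'$ should be homotopic inside $B_1(p)$. The homotopy $F_i : D \to B_{1-\epsilon}(p_i)$ need not converge, so I cannot simply take a limit. Instead I would run an analogue of the decomposition argument used in the proof of Theorem A. Fix a triangulation $\Sigma$ of $D$ with diameter of each triangle $\Delta$ so small that $\mathrm{diam}(F_i(\Delta))$ is much less than the $1$-contractibility scale $r_J$ provided by Theorem A (applied at finitely many basepoints covering $\overline{B}_{1-\epsilon/2}(p)$). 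Map each vertex $v \in \Sigma^0$ to a nearby point $H(v) \in X$, joining by minimal geodesics along edges and matching the prescribed boundary values $\gamma$ and $\gamma'$. Then for every triangle $\Delta$, the image $H(\partial \Delta)$ is a loop in a ball of radius at most $r_J$ about some point of $\overline{B}_{1-\epsilon/2}(p)$, hence contractible inside $B_1(p)$ by Theorem A. Filling each triangle produces a homotopy in $B_1(p)$ from $\gamma$ to $\gamma'$, showing $\Phi_i$ is well-defined.

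The homomorphism property follows at once, because concatenation of loops in $M_i$ is sent to concatenation of their approximating loops in $X$, up to the same kind of triangulated small-scale homotopy already controlled in the well-definedness step. For the inclusion $I \subset \mathrm{Im}(\Phi_i)$, I would take any loop $\alpha : S^1 \to B_\epsilon(p)$ representing a class in $I$, and reverse the approximation procedure: pick a fine partition of $S^1$, lift $\alpha(t_j)$ to points $a_j \in M_i$ within GH distance $\delta_i$, and close up with minimal geodesics to produce $\alpha_i \subset B_{2\epsilon}(p_i)$. By construction $\Phi_i([\alpha_i])$ is represented by a loop in $B_{3\epsilon}(p)$ that is $O(\delta_i)$-close to $\alpha$; another application of the triangulation and Theorem A shows they are homotopic inside $B_1(p)$, so $\Phi_i([\alpha_i]) = [\alpha]$ in $I'$, and in particular the image contains $I$.
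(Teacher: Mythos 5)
Your proposal is correct and follows essentially the same route as the paper: define $\Phi_i$ by GH-approximating the representative loop, establish a uniform $1$-contractibility radius on $\overline{B}_{1-\epsilon}(p)$ via Theorem A plus compactness, transfer the homotopy in $M_i$ to $X$ by pushing a fine triangulation of $D$ through the GH approximation and filling each small triangle using that uniform scale, and obtain $I \subset \mathrm{Im}(\Phi_i)$ by reversing the approximation. The only cosmetic difference is that the paper obtains the uniform contractibility scale by a contradiction/subsequence argument while you use a direct finite-cover argument; these are interchangeable.
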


\begin{proof}
We claim there exists a small number $\delta$ such that for all $x \in B_{1-\epsilon}(p)$, any loop in $B_{\delta}(x)$ is contractible in $B_1(p)$. Assume there is no such $\delta$. For any $j$, there exists $x_j \in B_{1-\epsilon}(p)$ such that some loops in $B_{1/j}(x_j)$ are not contractible in $B_1(p)$. Passing to a subsequence, we may assume $x_j$ converges to $x \in \bar{B}_{1-\epsilon}(p)$. For any $r>0$, $B_r(x)$ contains $B_{1/j}(x_j)$ when $j$ is large enough. Therefore some loops in $B_r(x)$ are not contractible in $B_1(p)$, which is contradictory to Theorem A.

We may choose $i$ large enough so that $d_{GH}((B_2(p),p), (B_2(p_i),p_i)) \le \delta/200$. For any $[\gamma_i] \in I_i$ where $\gamma_i$ is a loop in $B_{2\epsilon}(p_i)$ with base point $p_i$, we can find a loop $\gamma$ in $B_{3\epsilon}(p)$ such that $\gamma$ is $\delta/10$-close to $\gamma_i$ and $\gamma$ is based on $p$. Define $\Phi_i([\gamma_i])=[\gamma] \in I'$.

We check that $\Phi_i$ is well-defined. First we show that $\Phi_i([\gamma_i])$ does not depend on the choice of $\gamma$. Assume $\gamma'$ is another loop in $B_{3\epsilon}(p)$, based on $p$, such that $\gamma'$ is $\delta/10$-close to $\gamma_i$. We will show that $\gamma$ is homotopic to $\gamma'$ in $B_1(p)$. Since both $\gamma$ and $\gamma'$ are $\delta/10$-close to $\gamma_i$, $\gamma$ is $\delta/5$-close to $\gamma'$. Let $0=t_1 < t_2 ... < t_J = 1$ be a division of $[0,1]$ so that $d(\gamma(t_j),\gamma(t_{j+1}) < \delta/5$ for all $ 0 \le j \le J-1$. Define $H(0,t)=\gamma(t)$, $H(1,t)=\gamma'(t)$ for $t \in [0,1]$. Fix $t_j$, let $H(s,t_j)$ be a geodesic from $\gamma(t_j)$ to $\gamma'(t_j)$. let $R_j \subset [0,1] \times [0,1]$ be the rectangle with vertexes $(0,t_j), (0,t_{j+1}), (1, t_j), (1, t_j)$. Then $H(\partial R_j)$ is contained in $B_{\delta}(H(0,t_j))$. For now $H$ is only defined on the boundary of each $R_i$. By the choice of $\delta$, we can extend $H: [0,1] \times [0,1] \to B_1(p) $ continuously. Since $H(0,t)=\gamma(t)$ and $H(1,t)=\gamma'(t)$, $\gamma$ is homotopic to $\gamma'$ in $B_1(p)$.

Then we show that $\Phi_i([\gamma_i])$ does not depend on the choice of $\gamma_i$, therefore $\Phi_i$ is well-defined. Assume $\gamma_i$ and $\gamma_i'$ are two loops in $B_{2\epsilon}(p_i)$, which are  homotopic to each other in $B_{1-\epsilon}(p_i)$. In $B_{3\epsilon}(p)$, we can find $\gamma$ $\delta/5$-close to $\gamma_i$, $\gamma'$ $\delta/5$-close to $\gamma_i'$. We will show $\gamma$ is homotopic to $\gamma'$ in $B_1(p)$. Since $\gamma_i$ is homotopic to $\gamma_i'$, by the same construction of lemma \ref{lemma3} we can get a triangular decomposition of $[0,1] \times [0,1]$ and a continuous map $H$ from the $1$-skeleton of this decomposition to $B_{1}(p)$; $H(0,t)=c(t)$ and $H(1,t)=c'(t)$ for all $t \in [0,1]$. Moreover, for each triangle $\Delta$, $H(\partial \Delta)$ is contained in a $\delta$ ball  $B_{\delta}(x)$ where $x \in B_{1-\epsilon}(p)$. By the choice of $\delta$, we can extend $H$ and get a homotopy map between $\gamma$ and $\gamma'$. The image of $H$ is contained in $B_1(p)$. Therefore $\Phi_i([\gamma_i])$ is independent of the choice of $\gamma_i$.

$\Phi_i$ is a group homomorphism by the definition. We show that the image of $\Phi_i$ contains $I$. Choose any loop $\gamma$ in $B_{\epsilon}(p)$. We can find a loop $\gamma_i$ in $B_{2\epsilon}(p_i)$ $\delta/5$-close to $\gamma$ when $i$ is large enough. Then $\Phi_i([\gamma_i])=[\gamma]$.
\end{proof}

Now we can prove Theorem B:
\begin{proof}
Fix a large $i$. By theorem \ref{KW} and a rescaling argument, $I_i$ contains a nilpotent subgroup $N_i$ of index $\le C$. $N_i$ has a nilpotent basis
of length at most $n$. 

Let $\Phi_i$ be the homomorphism in lemma \ref{l4}.  Define $N:=\Phi_i(N_i) \cap I$. Since $\Phi_i$ is a group homomorphism, $N$ is nilpotent subgroup of $I$ and $N$ has a nilpotent basis
of length at most $n$. 

We show that the index of $N$ in $I$ is less than $C$. Assume there exists $C+1$ elements $g_j \in I$, of which the image $[g_j] \in I/N$ are different to each other, $1 \le j \le C+1$. Since $\Phi_i(I_i)$ contains $I$, we can find $g_j' \in I_i$ such that $\Phi_i(g_j')=g_j$. We claim $[g_j'] \in I_i/N_i$ are different to each other as well, which leads to a contradiction.

Now we prove the above claim. Assume $[g_1']=[g_2']$. Then $g_2'=g_1'g$ for some $g \in N_i$. $\Phi_i(g)=\Phi_i(g_2') \Phi_i(g_1')^{-1}= g_2g_1^{-1} \in I$. So $\Phi_i(g) \in \Phi_i(N_i) \cap I = N$. Since $g_2=g_1 \Phi_i(g)$, $[g_1]=[g_2] \in I/N$. It is contradictory to the choice of $g_j$. 
\end{proof}

\bibliographystyle{plain} 
\bibliography{bib}

\begin{thebibliography}{10}

\bibitem{Bredon}
Glen~E. Bredon.
\newblock {\em {I}ntroduction to compact transformation groups}, volume~46 of
  {\em Pure and applied mathematics}.
\newblock Academic, New York, 1972.

\bibitem{BGP1992}
Yu~Burago, M~Gromov, and G~Perelman.
\newblock A.{D}. {A}lexandrov spaces with curvature bounded below.
\newblock {\em Russian Mathematical Surveys}, 47(2):1--58, apr 1992.

\bibitem{CheegerColding1997}
Jeff Cheeger and Tobias~H. Colding.
\newblock On the structure of spaces with {R}icci curvature bounded below. i.
\newblock {\em J. Differential Geom.}, 46(3):406--480, 1997.

\bibitem{CheegerColding2000a}
Jeff Cheeger and Tobias~H. Colding.
\newblock On the structure of spaces with {R}icci curvature bounded below. ii.
\newblock {\em J. Differential Geom.}, 54(1):13--35, 2000.

\bibitem{CheegerColding2000b}
Jeff Cheeger and Tobias~H. Colding.
\newblock On the structure of spaces with {R}icci curvature bounded below. iii.
\newblock {\em J. Differential Geom.}, 54(1):37--74, 2000.

\bibitem{CheegerNaber2013}
Jeff Cheeger and Aaron Naber.
\newblock Lower bounds on {R}icci curvature and quantitative behavior of
  singular sets.
\newblock {\em Invent. Math.}, 191(2):321--339, 2013.

\bibitem{CheegerNaber2015}
Jeff Cheeger and Aaron Naber.
\newblock Regularity of {E}instein manifolds and the codimension 4 conjecture.
\newblock {\em Ann. of Math. (2)}, 182(3):1093--1165, 2015.

\bibitem{ColdingNaber2012}
Tobias~H. Colding and Aaron Naber.
\newblock {Sharp Hölder continuity of tangent cones for spaces with a lower
  Ricci curvature bound and applications}.
\newblock {\em Annals of Mathematics}, 176(2):1173--1229, 2012.

\bibitem{EnnisWei2010}
John Ennis and Guofang Wei.
\newblock Describing the universal cover of a noncompact limit.
\newblock {\em Geometry \& Topology 14 (2010) 2479–2496}.

\bibitem{Hatcher}
Allen Hatcher.
\newblock {\em Algebraic {G}eometry}.
\newblock Cambridge University Press, 2001.

\bibitem{Huang2020}
Hongzhi Huang.
\newblock Fibrations, and stability for compact group actions on manifolds with
  local bounded {R}icci covering geometry.
\newblock {\em Front. Math. China}, 15(1):69--89, 2020.

\bibitem{HKRX2020}
Hongzhi Huang, Lingling Kong, Xiaochun Rong, and Shicheng Xu.
\newblock Collapsed manifolds with {R}icci bounded covering geometry.
\newblock {\em to appear in Transactions of the American Mathematical Society},
  2020.

\bibitem{KapovitchWilking2011}
Vitali {Kapovitch} and Burkhard {Wilking}.
\newblock {Structure of fundamental groups of manifolds with Ricci curvature
  bounded below}.
\newblock {\em arXiv:1105.5955}, 2011.

\bibitem{Menguy2000}
X~Menguy.
\newblock Noncollapsing examples with positive {R}icci curvature and infinite
  topological type.
\newblock {\em GAFA, Geom. funct. anal. 10, 600–627 (2000).}

\bibitem{Palais1961}
Richard~S. Palais.
\newblock On the existence of slices for actions of non-compact {L}ie groups.
\newblock {\em Annals of Mathematics}, 73(2):295--323, 1961.

\bibitem{PanRong2018}
Jiayin {Pan} and Xiaochun {Rong}.
\newblock {Ricci curvature and isometric actions with scaling nonvanishing
  property}.
\newblock {\em arXiv:1808.02329}, 2018.

\bibitem{PanWang}
Jiayin Pan and Jikang Wang.
\newblock Some topological results of ricci limit spaces.
\newblock {\em arXiv:2103.11344}, 2021.

\bibitem{PanWei2019}
Jiayin {Pan} and Guofang {Wei}.
\newblock {Semi-local simple connectedness of non-collapsing Ricci limit
  spaces}.
\newblock {\em To appear in Journal of the European Mathematical Society,
  arXiv:1904.06877}, 2019.

\bibitem{Perelman1991}
Grigori Perelman.
\newblock Alexandrov’s spaces with curvature bounded from below {II}.
\newblock {\em Preprint}, 1991.

\bibitem{SormaniWei2001}
Christina Sormani and Guofang Wei.
\newblock Hausdorff convergence and universal covers.
\newblock {\em Transactions of the American Mathematical Society},
  353(9):3585--3602, 2001.

\bibitem{SormaniWei2004}
Christina Sormani and Guofang Wei.
\newblock Universal covers for {H}ausdorff limits of noncompact spaces.
\newblock {\em Transactions of the American Mathematical Society},
  356(3):1233--1270, 2004.

\bibitem{Tuschman1995}
W.~Tuschman.
\newblock Hausdorff convergence and the fundamental group.
\newblock {\em Math. Z., 208(1995), 207-211}.

\end{thebibliography}
\end{document}